\date{}
\begin{document}
\title {Reverse inequalities for the Berezin number of operators
\footnote{This project was funded by the National Plan for Science, Technology and Innovation (MAARIFAH), King Abdulaziz City for Science and Technology, Kingdom of Saudi Arabia, award number 13-MAT1276-02.
\newline
Authors' address: \ Department of Mathematics, College of Science, King Saud University,
P.O. Box 2455, Riyadh 11451,  Saudi Arabia.
\newline
Authors' Emails: mgarayev@ksu.edu.sa\ ; \ hguediri@ ksu.edu.sa ; \ najla@ksu.edu.sa}}

\author{Mubariz Garayev; Hocine Guediri and Najla Altwaijry}
\maketitle
\newtheorem{theorem}{Theorem}[section]
\newtheorem{lemma}{Lemma}[section]
\newtheorem{remark}{Remark}[section]
\newtheorem{proposition}{Proposition}[section]
\newtheorem{definition}{Definition}[section]
\newtheorem{corollary}{Corollary}[section]
\newtheorem{example}{Example}[section]
\newcommand{\BB}{\ensuremath{\mathfrak{B}}}
\newcommand{\RR}{\ensuremath{\mathfrak{R}}}
\newcommand{\LL}{\ensuremath{\mathfrak{L}}}
\newcommand{\HH}{\ensuremath{\mathfrak{H}}}
\newcommand{\eSS}{\ensuremath{\mathfrak{S}}}
\newcommand{\DD}{\ensuremath{\mathfrak{D}}}
\newcommand{\WW}{\ensuremath{\mathfrak{W}}}
\newcommand{\D}{\ensuremath{\mathbb{D}}}
\newcommand{\V}{\ensuremath{\mathbb{V}}}
\newcommand{\W}{\ensuremath{\mathbb{W}}}
\newcommand{\F}{\ensuremath{\mathbb{F}}}
\newcommand{\Q}{\ensuremath{\mathbb{Q}}}
\newcommand{\U}{\ensuremath{\mathbb{U}}}
\newcommand{\A}{\ensuremath{\mathbb{A}}}
\newcommand{\R}{\ensuremath{\mathbb{R}}}
\newcommand{\N}{\ensuremath{\mathbb{N}}}
\newcommand{\B}{\ensuremath{\mathbb{B}}}
\newcommand{\C}{\ensuremath{\mathbb{C}}}
\newcommand{\eH}{\ensuremath{\mathbb{H}}}
\newcommand{\Z}{\ensuremath{\mathbb{Z}}}
\newcommand{\K}{\ensuremath{\mathbb{K}}}
\newcommand{\T}{\ensuremath{\mathbb{T}}}
\newcommand{\M}{\ensuremath{\mathbb{M}}}
\newcommand{\X}{\ensuremath{\mathbb{X}}}
\newcommand{\Y}{\ensuremath{\mathbb{Y}}}
\newcommand{\es}{\ensuremath{\mathbb{S}}}
\newcommand{\E}{\ensuremath{\mathbb{E}}}
\newcommand{\el}{\ensuremath{\mathbb{L}}}
\newcommand{\AAA}{\ensuremath{\mathscr{A}}}
\newcommand{\BBB}{\ensuremath{\mathscr{B}}}
\newcommand{\HHH}{\ensuremath{\mathscr{H}}}
\newcommand{\LLL}{\ensuremath{\mathscr{L}}}
\newcommand{\WWW}{\ensuremath{\mathscr{W}}}
\newcommand{\KKK}{\ensuremath{\mathscr{K}}}
\newcommand{\RRR}{\ensuremath{\mathscr{R}}}
\newcommand{\CCC}{\ensuremath{\mathscr{C}}}
\newcommand{\UUU}{\ensuremath{\mathscr{U}}}
\newcommand{\MMM}{\ensuremath{\mathscr{M}}}
\newcommand{\SSS}{\ensuremath{\mathscr{S}}}
\newcommand{\pr}{\bf Proof.}
\newcommand{\epr}{\ensuremath{\blacksquare}}
\numberwithin{equation}{section}
\begin{abstract}
For a bounded linear operator $A$ on a reproducing kernel Hilbert space $\HHH(\Omega)$, with normalized reproducing kernel $\widehat{k}_{\lambda}=\frac{k_{\lambda}}{\lVert k_{\lambda}\lVert}$, the Berezin symbol, Berezin number and Berezin norm are defined respectively by 
 $\widetilde{A}(\lambda)=\langle A\widehat{k}_{\lambda},\widehat{k}_{\lambda}\rangle$, 
 $ber(A)=\sup_{\lambda\in\Omega}\left|\widetilde{A}(\lambda)\right|$  and
 $\left\|A\right\|_{ber}=\sup_{\lambda\in\Omega}\left\|A\widehat{k}_{\lambda}\right\|$. A straightforward comparison between these characteristics yields the inequalities $ber(A)\leq\left\|A\right\|_{ber}\leq\lVert A\lVert$. In this paper, we prove further  inequalities relating them, and give special care  to the corresponding reverse inequalities. In particular, we refine the first one of the above inequalities, namely we prove that \ $ber(A)\leq\left( \left\|A\right\|_{ber}^{2}-\inf_{\lambda\in\Omega}\left\lVert (A-\widetilde{A}(\lambda))\widehat{k}_{\lambda}\right\lVert^{2}\right) ^{\frac{1}{2}}.$
\end{abstract}
\noindent{{\bf Key Words:} {\sf  Berezin symbol, Berezin number, numerical radius, positive operator, 
hyponormal operator, invertible operator.}
\bigskip

\noindent{{\bf 2020 Math Subject Classification:} {\sf Primary: 47A30 ; Secondary:  47B20.}}
\section{Introduction}
\noindent A reproducing kernel Hilbert space (RKHS) is a Hilbert space $\HHH=\HHH(\Omega)$ of complex 
valued functions on a (non-empty) set $\Omega$ with the property that the evaluation functional 
$f\rightarrow f(\lambda)$ is continuous on $\HHH$ for every $\lambda\in\Omega$. Then the Riesz 
representation theorem ensures the existence of  a unique 
element $k_{\lambda}\in\HHH$,  for each $\lambda\in\Omega$, such that
\begin{equation}
\label{equ(1.2)}
f(\lambda)=\langle f,k_{\lambda}\rangle\quad\text{for all}\quad f\in\HHH.
\end{equation}
The function $k_{\lambda}$, $\lambda\in\Omega$, is called the reproducing kernel of $\HHH$. If 
$\{ e_{n}\}$ is an orthonormal basis for a RKHS $\HHH$, then its reproducing kernel is given by 
$k_{\lambda}(z)=\sum_{n}\overline{e_{n}(\lambda})e_{n}(z)$; see Aronzajn \cite{Aronzajn} and Saitoh and 
Sowano \cite{Saitoh}. For $\lambda\in\Omega$, let $\widehat{k}_{\lambda}=\frac{k_{\lambda}}{\lVert k_{\lambda}\lVert}$ be
the normalized reproducing kernel of $\HHH$. For a bounded linear operator $A\in\cal{B}(\HHH)$,  (with $\cal{B}(\HHH)$
being the Banach algebra of all bounded linear operators on $\HHH$), the function 
$\widetilde{A}:\Omega\rightarrow \C$ defined by 
\begin{equation}
\label{equ(1.3)}
\widetilde{A}(\lambda):=\langle A\widehat{k}_{\lambda},\widehat{k}_{\lambda}\rangle
\end{equation}
is the Berezin symbol of $A$, which  was first introduced by Berezin \cite{Berezin1,Berezin2}. 
The Berezin set and the Berezin number of the operator $A$ are defined, respectively, by  (see \cite{Karaev2,Karaev4}):
\begin{equation}
\label{equ(1.3.1)}
Ber(A):=\text{Range}(\widetilde{A})=\left\{\widetilde{A}(\lambda):\lambda\in\Omega\right\},
\end{equation}
and 
\begin{equation}
\label{equ(1.3.2)}
ber(A)=\sup_{\lambda\in\Omega}\left|\widetilde{A}(\lambda)\right|.
\end{equation}
It is clear that the Berezin symbol  $\widetilde{A}$ is a bounded function on $\Omega$ and that $Ber(A)\subset W(A)$ and 
$ber(A)\leq w(A)$ for all $A\in\cal{B}(\HHH)$, where
\begin{equation}
\label{equ(1.4)}
W(A):=\left\{\left\langle Ax,x\right\rangle: x\in\HHH\quad\text{and}\quad\left\|x\right\|=1\right\}
\end{equation}
is the numerical range of the operator $A$ and
\begin{equation}
\label{equ(1.5)}
w(A):=\sup\{\left|\left\langle Ax,x\right\rangle\right|: x\in\HHH\quad \text{and}\quad\left\|x\right\|=1\}
\end{equation}
is the numerical radius of $A$. It is well known that there are concrete examples of operators for which $Ber(A)$ is a proper subset of
$W(A)$  and $ber(A)< w(A)$; and others satisfying $\overline{Ber(A)}=\sigma(A)$,  $Ber(A)=W(A)$ and  $ber(A)=w(A)=\lVert A\rVert$, (see the first author's paper  \cite{Karaev2}). The Berezin number of an operator $A$ 
satisfies the following properties:
\begin{itemize}
\item[(i)] $ber(A)\leq \lVert A\lVert$.
\item[(ii)] $ber(\alpha A)=|\alpha| ber( A)$ for all $\alpha\in\C$.
\item[(iii)] $ber(A+B)\leq ber(A)+ber(B)$.
\end{itemize}
Notice that, in general, the Berezin number does not define a norm. However, if $\HHH$ is a $RKHS$ 
of analytic functions, (for instance  on the unit disc $\D=\{z\in\C: |z|<1\}$), then $ber(A)$ defines a norm on 
$\cal{B}(\HHH(\D))$; which follows from the following lemma (see, for instance, Zhu \cite{Zhu})
\begin{lemma}
\label{lem(1.1)}
 Let $\HHH=\HHH(\D)$ be a RKHS of analytic functions on $\D$, and let $A\in\cal{B}(\HHH)$ be an operator. 
 Then the Berezin symbol $\widetilde{A}$ uniquely defines the operator $A$, i.e., $A=0$ if and only if 
$\widetilde{A}=0$.
\end{lemma}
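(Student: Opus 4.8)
The plan is to prove the nontrivial implication, that $\widetilde{A}\equiv 0$ forces $A=0$; the converse is immediate from the definition of the Berezin symbol. The first step is to pass from the diagonal to the full ``kernel matrix''. For every $\lambda\in\D$ one has $\lVert k_{\lambda}\rVert^{2}=k_{\lambda}(\lambda)>0$ (were it $0$, then $f(\lambda)=\langle f,k_{\lambda}\rangle=0$ for every $f\in\HHH$, which is absurd), so $\widetilde{A}(\lambda)=0$ is equivalent to $\langle Ak_{\lambda},k_{\lambda}\rangle=0$. I would then aim to upgrade this to
\[
\langle Ak_{\mu},k_{\lambda}\rangle=0\qquad\text{for all }\lambda,\mu\in\D .
\]
Granting this, fix $\mu\in\D$; the element $Ak_{\mu}\in\HHH$ satisfies $(Ak_{\mu})(\lambda)=\langle Ak_{\mu},k_{\lambda}\rangle=0$ for all $\lambda$, hence $Ak_{\mu}=0$. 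As a vector orthogonal to every $k_{\mu}$ vanishes identically, the set $\{k_{\mu}:\mu\in\D\}$ spans a dense subspace of $\HHH$, and continuity of $A$ gives $A=0$.

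The bridge to the displayed identity is the \emph{sesqui-analytic} nature of the kernel $F(\lambda,\mu):=\langle Ak_{\mu},k_{\lambda}\rangle$. Since $Ak_{\mu}$ belongs to $\HHH$, a space of analytic functions on $\D$, the map $\lambda\mapsto F(\lambda,\mu)=(Ak_{\mu})(\lambda)$ is holomorphic on $\D$; and since $F(\lambda,\mu)=\overline{\langle A^{*}k_{\lambda},k_{\mu}\rangle}=\overline{(A^{*}k_{\lambda})(\mu)}$, the map $\mu\mapsto F(\lambda,\mu)$ is anti-holomorphic on $\D$. The reduced hypothesis says precisely that $F$ vanishes on the diagonal $\{(\lambda,\lambda):\lambda\in\D\}$.

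The heart of the proof is the elementary fact that a function which is holomorphic in one variable, anti-holomorphic in the other, and zero on the diagonal must vanish identically. I would prove this by writing $F(\lambda,\mu)=\sum_{m,n\ge 0}c_{mn}\,\lambda^{m}\overline{\mu}^{\,n}$, obtained by iterating the one-variable Taylor expansions (with convergence controlled by Cauchy estimates on compact subsets of $\D\times\D$). Setting $\lambda=re^{i\theta}$ with $0\le r<1$, the diagonal relation reads $\sum_{m,n}c_{mn}\,r^{m+n}e^{i(m-n)\theta}=0$ for all $\theta\in\R$; uniqueness of Fourier coefficients in $\theta$ yields $\sum_{m-n=d}c_{mn}\,r^{m+n}=0$ for every integer $d$ and all $r\in[0,1)$, and because the exponents $m+n$ arising here are pairwise distinct, uniqueness of power-series coefficients in $r$ forces $c_{mn}=0$ for all $m,n$. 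Hence $F\equiv 0$, and the reduction above gives $A=0$.

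I expect the last step to be the only real obstacle, and it is also the reason the hypothesis on $\HHH$ cannot be weakened: one cannot polarize $\langle Ak_{\lambda},k_{\lambda}\rangle=0$ directly, since $\{k_{\lambda}:\lambda\in\D\}$ is not a linear subspace, so the passage from the diagonal to all pairs $(\lambda,\mu)$ genuinely requires the functions in $\HHH$ to be analytic. The remaining ingredients — positivity of $\lVert k_{\lambda}\rVert^{2}$, density of the span of reproducing kernels, and the trivial converse — are routine.
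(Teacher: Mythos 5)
The paper offers no proof of this lemma at all---it is stated with only a citation to Zhu's book---and your argument is precisely the standard one found there: reduce the hypothesis to the vanishing of the sesqui-analytic kernel $F(\lambda,\mu)=\langle Ak_{\mu},k_{\lambda}\rangle$ on the diagonal, conclude $F\equiv 0$ by the power-series/Fourier-coefficient argument, and finish using the density of the span of the reproducing kernels. Your write-up is correct; the only step worth making explicit is that the joint convergence of the double expansion of $F$ on compact subsets of $\D\times\D$ rests on the local boundedness of $\lambda\mapsto\lVert k_{\lambda}\rVert$, which follows from the uniform boundedness principle because each $f\in\HHH$ is continuous on $\D$.
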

\noindent Now, for any operator $A$ on the RKHS $\HHH=\HHH(\Omega)$,  let us define the Berezin norm of the operator $A$ by
\begin{equation}
\label{equ(1.6)}
\left\|A\right\|_{ber}:=\sup_{\lambda\in\Omega}\left\|A\widehat{k}_{\lambda}\right\|_{\HHH}.
\end{equation}
Clearly  $\left\|A\right\|_{ber}$ shares the properties  (i)-(iii) with $ber(A)$. Also, since the family $\{k_{\lambda}:\lambda\in\Omega\}$ is complete in $\HHH$, 
it is elementary to verify that $\left\|A\right\|_{ber}=0$ if and only if $A=0$. So, 
these properties together mean that $\left\|A\right\|_{ber}$ is a norm in $\cal{B}(\HHH)$. Clearly 
$ber(A)\leq\left\|A\right\|_{ber}$ for any $A\in\cal{B}(\HHH)$. However, it is known that in the case of the unit disk $\D$ these two 
new operator norms are not equivalent norms with respect to the usual operator norm 
$\lVert A\lVert:=\sup\{\lVert Ax\lVert: x\in\HHH\quad\text{and}\quad\lVert x\lVert=1\}$. Namely, 
Engli\v{s} \cite{Englis1} proved that 
\begin{equation}
\label{equ(1.7)}
\lVert T_{f}\lVert\leq C\sup_{z\in\D}|\widetilde{T_{f}}(z)|=C\ ber(T_{f}), \ \forall{f}\in L^{\infty}(\D,dm_{2}),
\end{equation}
can not hold for any constant $C>0$, where $T_{f}$ is the Toeplitz operator on the Bergman Hilbert space $L_{a}^{2}=L_{a}^{2}(\D)$ and 
$dm_{2}$ is the usual normalized area measure on $\D$. Later, Nazarov showed the inequality
(see Miao and Zheng \cite{Miao} Section 6):
\begin{equation}
\label{equ(1.8)}
\lVert T_{f}\lVert\leq C\lVert T_{f}\lVert_{ber}, \ \ \forall{f}\in L^{\infty}(\D,dm_{2}),
\end{equation}
can not hold for any constant $C>0$. These results show that in general there is no  universal constants 
$C_{1}, C_{2}>0$ such that $\lVert A\lVert\leq C_{1} ber(A)$ and 
$\lVert A\lVert\leq C_{2}\lVert A\lVert_{ber}$. 

\noindent Dragomir \cite{Dragomir1,Dragomir2} obtained some elegant reverse inequalities related to the
classical numerical radius power inequality
\begin{equation}
\label{equ(1.12)}
w^{2}(A)\leq w(A^{2})+\inf_{\lambda\in\C}\lVert A-\lambda I\lVert^{2}, \ \mbox{ for }  A\in{\cal{B}}(\HHH({\Omega})).
\end{equation}
In this paper, by using some ideas of \cite{Dragomir1,Dragomir2}, we prove several reverse 
inequalities involving $ber(A)$ and $\lVert A\lVert_{ber}$. In particular, we prove some analogue of the
inequality (\ref{equ(1.12)}) for the Berezin number of operators. We also 
discuss some problems related to invertible operators and  hyponormal operators on a RKHS.
\section{Reverse Berezin number and Berezin norm inequalities for  invertible operators}
\noindent In this section, we prove some new reverse inequalities for the Berezin number and the Berezin norm  of 
two operators $A, B$ on $\HHH(\Omega)$ with invertible $B$. Note that similar results for 
$\lVert A\lVert$ and $w(A)$, are proved by Dragomir \cite{Dragomir1}. 
\begin{proposition}
\label{pro(2.1)}
Let $\HHH=\HHH(\Omega)$ be a RKHS, and let $A,B\in{\cal{B}}(\HHH)$ be two operators, where $B$ 
is invertible,  satisfying  the following inequality  for a given $r>0$:
\begin{equation}
\label{equ(2.1)}
\lVert A-B\lVert_{ber}\leq r.
\end{equation}
Then, we have
\begin{equation}
\label{equ(2.2)}
\lVert A\lVert_{ber}\leq \lVert B^{-1}\lVert\left[ber(B^{*}A)+\frac{1}{2}r^{2} \right] .
\end{equation}
\end{proposition}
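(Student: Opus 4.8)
The plan is to fix a point $\lambda\in\Omega$, derive a pointwise bound for $\|A\widehat{k}_{\lambda}\|$ in terms of $ber(B^{*}A)$, $r$ and $\|B^{-1}\|$, and then take the supremum over $\lambda\in\Omega$. This is the Berezin-norm analogue of Dragomir's reverse estimate for the operator norm, and the argument follows the same pattern: expand a square, bound a cross term by a value of a Berezin symbol, apply the elementary inequality $2ab\le a^{2}+b^{2}$, and finally use the invertibility of $B$.

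Concretely, write $x=\widehat{k}_{\lambda}$, a unit vector. Since $\|A-B\|_{ber}\le r$ we have $\|(A-B)x\|\le r$ for every $\lambda\in\Omega$, and expanding the square gives
\begin{equation*}
\|Ax\|^{2}-2\,\mathrm{Re}\,\langle Ax,Bx\rangle+\|Bx\|^{2}\le r^{2}.
\end{equation*}
I would then rewrite the cross term via the adjoint, $\langle Ax,Bx\rangle=\langle B^{*}Ax,x\rangle=\widetilde{B^{*}A}(\lambda)$, whence $\mathrm{Re}\,\langle Ax,Bx\rangle\le|\widetilde{B^{*}A}(\lambda)|\le ber(B^{*}A)$; substituting back yields $\|Ax\|^{2}+\|Bx\|^{2}\le r^{2}+2\,ber(B^{*}A)$. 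Applying $2\|Ax\|\,\|Bx\|\le\|Ax\|^{2}+\|Bx\|^{2}$ gives
\begin{equation*}
\|A\widehat{k}_{\lambda}\|\,\|B\widehat{k}_{\lambda}\|\le ber(B^{*}A)+\tfrac{1}{2}r^{2}.
\end{equation*}
Now the invertibility of $B$ enters: from $1=\|x\|=\|B^{-1}Bx\|\le\|B^{-1}\|\,\|Bx\|$ we obtain $\|B\widehat{k}_{\lambda}\|\ge\|B^{-1}\|^{-1}$, so the previous display forces $\|A\widehat{k}_{\lambda}\|\le\|B^{-1}\|\bigl(ber(B^{*}A)+\tfrac{1}{2}r^{2}\bigr)$ for every $\lambda\in\Omega$. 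Taking the supremum over $\lambda\in\Omega$ yields (\ref{equ(2.2)}).

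I do not anticipate a genuine obstacle; the only delicate point is that the cross term must be controlled by $ber(B^{*}A)$ rather than by a coarser quantity such as $w(B^{*}A)$ or $\|B^{*}A\|$. This is exactly what evaluating on reproducing kernels provides, since $\langle B^{*}A\widehat{k}_{\lambda},\widehat{k}_{\lambda}\rangle$ is by definition a value of the Berezin symbol $\widetilde{B^{*}A}$. Everything else reduces to the Cauchy--Schwarz and AM--GM inequalities together with the norm estimate coming from $B^{-1}$.
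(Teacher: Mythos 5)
Your proof is correct and follows essentially the same route as the paper: expand $\|(A-B)\widehat{k}_{\lambda}\|^{2}\le r^{2}$, bound the cross term by the Berezin symbol of $B^{*}A$, use AM--GM together with $\|B\widehat{k}_{\lambda}\|\ge\|B^{-1}\|^{-1}$, and take the supremum. The only cosmetic difference is the order of operations (you apply AM--GM pointwise before invoking the lower bound on $\|B\widehat{k}_{\lambda}\|$, whereas the paper first takes the supremum and then applies AM--GM to $\lVert A\rVert_{ber}^{2}+\lVert B^{-1}\rVert^{-2}$), which changes nothing of substance.
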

\begin{proof}
Clearly  (\ref{equ(2.1)}) is equivalent to the inequality 
\begin{equation}
\label{equ(2.3)}
\langle (A-B)\widehat{k}_{\lambda}, (A-B)\widehat{k}_{\lambda}\rangle\leq r^{2},
\end{equation}
which can be rephrased as
\begin{equation}
\label{equ(2.4)}
\lVert A\widehat{k}_{\lambda}\rVert^{2}+\lVert B\widehat{k}_{\lambda}\rVert^{2} 
\leq 2 Re\langle B^{*}A\widehat{k}_{\lambda},\widehat{k}_{\lambda}\rangle +r^{2},
\end{equation}
for all $\lambda\in\Omega$. Since $B$ is invertible, we obtain
\begin{equation}
\label{equ(2.5)}
\lVert Bx\rVert^{2}\geq\frac{1}{\lVert B^{-1}\rVert^{2}}\lVert x\rVert^{2}
\end{equation}
for all $x\in\HHH(\Omega)$. In particular, for $x=\widehat{k}_{\lambda}$, we have that
$
\lVert B\widehat{k}_{\lambda}\rVert^{2}\geq\frac{1}{\lVert B^{-1}\rVert^{2}}
$
for all $\lambda\in\Omega$. Now, by considering that 
$ Re\langle B^{*}A\widehat{k}_{\lambda},\widehat{k}_{\lambda}\rangle
\leq |\langle B^{*}A\widehat{k}_{\lambda},\widehat{k}_{\lambda}\rangle|=|\widetilde{B^{*}A(\lambda)}|$, 
we get from (\ref{equ(2.4)}) that
\begin{equation}
\label{equ(2.6)}
\lVert A\widehat{k}_{\lambda}\rVert^{2}+\frac{1}{\lVert B^{-1}\rVert^{2}}\leq
2|\widetilde{B^{*}A(\lambda)}|+r^{2}
\end{equation}
for all $\lambda\in\Omega$. Then taking the supremum over $\lambda\in\Omega$ in (\ref{equ(2.6)}), we obtain
\begin{equation}
\label{equ(2.7)}
\lVert A\rVert^{2}_{ber}+\frac{1}{\lVert B^{-1}\rVert^{2}}\leq
2ber(B^{*}A)+r^{2}.
\end{equation}
By the elementary geometric-arithmetic mean inequality, from (\ref{equ(2.7)}) we see that 
\begin{equation}
\label{equ(2.8)}
\frac{2\lVert A\rVert_{ber}}{\lVert B^{-1}\rVert}\leq
2ber(B^{*}A)+r^{2},
\end{equation}
which implies the desired result.
\end{proof}
\noindent In what follows, we will use the short notation $A-\mu$ instead of $A-\mu I$,  
where $I$ is the identity operator on $\HHH(\Omega)$. First, observe that we have the following consequence of Proposition \ref{pro(2.1)}:
\begin{corollary}
\label{cor(2.1)}
For an operator $A\in{\cal{B}}(\HHH)$, we have
\begin{itemize}
\item[(i)] $0\leq \lVert A\rVert_{ber}-ber(A)\leq
\frac{1}{2|\mu|}r^{2}$ \mbox{ provided that }\ $\lVert A-\mu\rVert_{ber}\leq r$.
\item[(ii)]
$\lVert A\lVert_{ber}\leq \lVert A^{-1}\lVert\left[ber(A^{2})+\frac{1}{2|\mu|}r^{2} \right]$ 
  \mbox{ provided that }\  $\lVert A-\mu A^{*}\rVert_{ber}\leq r$, $\mu\neq 0$.
\end{itemize}
\end{corollary}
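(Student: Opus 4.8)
The plan is to obtain both parts as immediate specializations of Proposition \ref{pro(2.1)}, choosing the invertible operator $B$ appropriately in each case; the scaling rule $ber(\alpha T)=|\alpha|\,ber(T)$ and the identity $\lVert T^{*}\rVert=\lVert T\rVert$ will do the rest. Note that in both items $\mu\neq 0$ is a standing assumption (otherwise the right-hand sides are undefined and the operators below fail to be invertible).

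For part (i), I would apply Proposition \ref{pro(2.1)} with $B=\mu I$, which is invertible since $\mu\neq 0$. Then the hypothesis $\lVert A-B\rVert_{ber}=\lVert A-\mu\rVert_{ber}\leq r$ is precisely (\ref{equ(2.1)}); moreover $B^{-1}=\mu^{-1}I$ gives $\lVert B^{-1}\rVert=|\mu|^{-1}$, and $B^{*}A=\overline{\mu}A$ gives $ber(B^{*}A)=|\mu|\,ber(A)$. Substituting these into (\ref{equ(2.2)}) yields $\lVert A\rVert_{ber}\leq |\mu|^{-1}\bigl(|\mu|\,ber(A)+\tfrac{1}{2}r^{2}\bigr)=ber(A)+\tfrac{1}{2|\mu|}r^{2}$, which is the asserted upper bound. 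The lower bound $0\leq\lVert A\rVert_{ber}-ber(A)$ is just the already-noted inequality $ber(A)\leq\lVert A\rVert_{ber}$.

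For part (ii), I would instead take $B=\mu A^{*}$; this is invertible exactly because $A$ is (which is implicit, since $\lVert A^{-1}\rVert$ occurs in the conclusion). The hypothesis $\lVert A-\mu A^{*}\rVert_{ber}\leq r$ is again (\ref{equ(2.1)}) for this $B$. Since $B^{-1}=\mu^{-1}(A^{*})^{-1}=\mu^{-1}(A^{-1})^{*}$, we get $\lVert B^{-1}\rVert=|\mu|^{-1}\lVert A^{-1}\rVert$; and $B^{*}A=(\mu A^{*})^{*}A=\overline{\mu}A^{2}$, so $ber(B^{*}A)=|\mu|\,ber(A^{2})$. Plugging into (\ref{equ(2.2)}) gives $\lVert A\rVert_{ber}\leq |\mu|^{-1}\lVert A^{-1}\rVert\bigl(|\mu|\,ber(A^{2})+\tfrac{1}{2}r^{2}\bigr)=\lVert A^{-1}\rVert\bigl(ber(A^{2})+\tfrac{1}{2|\mu|}r^{2}\bigr)$, which is the claim.

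There is no genuine obstacle here: the only care needed is to compute $B^{*}A$ and $\lVert B^{-1}\rVert$ correctly and to remember the conventions $A-\mu:=A-\mu I$ and $(A^{*})^{-1}=(A^{-1})^{*}$. If one wanted to be thorough, one could also record the degenerate observation that part (i) with $\mu=1$ recovers the trivial refinement $\lVert A\rVert_{ber}-ber(A)\leq\tfrac12\lVert A-I\rVert_{ber}^{2}$, illustrating that the corollary is genuinely a scaled form of Proposition \ref{pro(2.1)}.
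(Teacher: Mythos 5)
Your proposal is correct and is exactly the argument the paper intends: the corollary is stated there without proof as an immediate consequence of Proposition \ref{pro(2.1)}, obtained by the very specializations $B=\mu I$ and $B=\mu A^{*}$ that you carry out, and your computations of $\lVert B^{-1}\rVert$ and $ber(B^{*}A)$ are accurate. Nothing further is needed.
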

\noindent It can be easily seen from the proof of Proposition \ref{pro(2.1)} that the invertibility 
condition of the operator $B$ can be replaced by the condition that
\begin{equation}
\label{equ(2.9)}
\widetilde{|B|^{2}}(\lambda)\geq C
\end{equation}
for all $\lambda\in\Omega$ and for some $C>0$, where $|B|:=(B^{*}B)^{\frac{1}{2}}
=\sqrt{B^{*}B}$ denotes the modulus (positive part) of the operator $B$. Namely, we can state without proof the following 
proposition.
\begin{proposition}
\label{pro(2.2)}
Let $A, B\in{\cal{B}}(\HHH)$ be such that (\ref{equ(2.1)}) holds, with  $B$ satisfying inequality (\ref{equ(2.9)}). Then  
\begin{equation}
\label{equ(2.10)}
\lVert A\lVert_{ber}=\sqrt{ber(|A|^{2})}\leq \frac{1}{\sqrt C}\left[ber(B^{*}A)+\frac{1}{2}r^{2} \right].
\end{equation}
\end{proposition}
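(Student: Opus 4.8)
The plan is to mimic the proof of Proposition \ref{pro(2.1)} almost verbatim, replacing the lower bound coming from invertibility of $B$ with the hypothesis \eqref{equ(2.9)}. First I would observe that, exactly as in \eqref{equ(2.3)}--\eqref{equ(2.4)}, the assumption $\lVert A-B\lVert_{ber}\leq r$ is equivalent to
\begin{equation}
\lVert A\widehat{k}_{\lambda}\rVert^{2}+\lVert B\widehat{k}_{\lambda}\rVert^{2}\leq 2\,Re\langle B^{*}A\widehat{k}_{\lambda},\widehat{k}_{\lambda}\rangle+r^{2}
\end{equation}
for every $\lambda\in\Omega$.

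Next I would note the key identity $\lVert B\widehat{k}_{\lambda}\rVert^{2}=\langle B^{*}B\widehat{k}_{\lambda},\widehat{k}_{\lambda}\rangle=\langle |B|^{2}\widehat{k}_{\lambda},\widehat{k}_{\lambda}\rangle=\widetilde{|B|^{2}}(\lambda)$, so that hypothesis \eqref{equ(2.9)} gives directly $\lVert B\widehat{k}_{\lambda}\rVert^{2}\geq C$ for all $\lambda\in\Omega$; this plays exactly the role that \eqref{equ(2.5)} played in Proposition \ref{pro(2.1)}. Combining this with $Re\langle B^{*}A\widehat{k}_{\lambda},\widehat{k}_{\lambda}\rangle\leq |\widetilde{B^{*}A}(\lambda)|$ yields
\begin{equation}
\lVert A\widehat{k}_{\lambda}\rVert^{2}+C\leq 2|\widetilde{B^{*}A}(\lambda)|+r^{2},
\end{equation}
and taking the supremum over $\lambda\in\Omega$ gives $\lVert A\rVert_{ber}^{2}+C\leq 2\,ber(B^{*}A)+r^{2}$. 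Finally, the arithmetic--geometric mean inequality $2\sqrt{C}\,\lVert A\rVert_{ber}\leq \lVert A\rVert_{ber}^{2}+C$ turns this into $2\sqrt{C}\,\lVert A\rVert_{ber}\leq 2\,ber(B^{*}A)+r^{2}$, which is \eqref{equ(2.10)}; the identity $\lVert A\rVert_{ber}=\sqrt{ber(|A|^{2})}$ follows since $\lVert A\widehat{k}_{\lambda}\rVert^{2}=\widetilde{|A|^{2}}(\lambda)$ and the supremum of $\widetilde{|A|^{2}}$ over $\Omega$ is by definition $ber(|A|^{2})$ (the latter being nonnegative because $|A|^{2}$ is positive).

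There is essentially no obstacle here: every step is a direct transcription of the argument already carried out for Proposition \ref{pro(2.1)}, with the single substitution of \eqref{equ(2.9)} for the invertibility-derived bound \eqref{equ(2.5)}. The only mild point worth stating carefully is the remark that $\lVert B\widehat{k}_{\lambda}\rVert^{2}$ coincides with $\widetilde{|B|^{2}}(\lambda)$, which is what makes condition \eqref{equ(2.9)} the natural weakening of invertibility; this is presumably why the authors felt entitled to state the proposition "without proof." I would include just the two or three displayed inequalities above and the concluding AM--GM step, noting the parallel with the previous proof rather than repeating the routine algebra.
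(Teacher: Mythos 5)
Your proposal is correct and is exactly the argument the paper intends: the authors state Proposition \ref{pro(2.2)} without proof, remarking only that in the proof of Proposition \ref{pro(2.1)} the invertibility bound \eqref{equ(2.5)} can be replaced by \eqref{equ(2.9)}, which is precisely the substitution you carry out (via the identity $\lVert B\widehat{k}_{\lambda}\rVert^{2}=\widetilde{|B|^{2}}(\lambda)$). Your additional justification of $\lVert A\rVert_{ber}=\sqrt{ber(|A|^{2})}$ is a correct and welcome detail.
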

\begin{proposition}
\label{pro(2.3)}
Let $A,B\in{\cal{B}}(\HHH({\Omega}))$ be two operators satisfying (\ref{equ(2.1)}) and suppose that $B$ 
is invertible. Then, we have
\begin{equation}
\label{equ(2.13)}
\lVert A\lVert_{ber}\lVert B\rVert\leq ber(B^{*}A)+\frac{1}{2}\left[ r^{2} 
+\lVert B\rVert^{2}-\lVert B^{-1}\rVert^{-2}\right] .
\end{equation}
\end{proposition}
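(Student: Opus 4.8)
The plan is to retrace the opening of the proof of Proposition~\ref{pro(2.1)}, but to redistribute the terms so that $\lVert B\rVert$, rather than $\lVert B^{-1}\rVert^{-1}$, ends up as a factor on the left-hand side. As in the passage leading to (\ref{equ(2.4)}), the hypothesis (\ref{equ(2.1)}) is equivalent to
\begin{equation*}
\lVert A\widehat{k}_{\lambda}\rVert^{2}+\lVert B\widehat{k}_{\lambda}\rVert^{2}\leq 2\,Re\langle B^{*}A\widehat{k}_{\lambda},\widehat{k}_{\lambda}\rangle+r^{2}\leq 2\left|\widetilde{B^{*}A}(\lambda)\right|+r^{2}
\end{equation*}
for every $\lambda\in\Omega$.

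The decisive step is to split the second term on the left as $\lVert B\widehat{k}_{\lambda}\rVert^{2}=\lVert B\rVert^{2}-\bigl(\lVert B\rVert^{2}-\lVert B\widehat{k}_{\lambda}\rVert^{2}\bigr)$ and then to bring in the invertibility of $B$ via $\lVert B\widehat{k}_{\lambda}\rVert^{2}\geq\lVert B^{-1}\rVert^{-2}$, which is (\ref{equ(2.5)}) evaluated at $x=\widehat{k}_{\lambda}$. This gives
\begin{equation*}
\lVert A\widehat{k}_{\lambda}\rVert^{2}+\lVert B\rVert^{2}\leq 2\left|\widetilde{B^{*}A}(\lambda)\right|+r^{2}+\lVert B\rVert^{2}-\lVert B^{-1}\rVert^{-2}
\end{equation*}
for all $\lambda\in\Omega$; note that the added quantity $\lVert B\rVert^{2}-\lVert B^{-1}\rVert^{-2}$ is nonnegative since $\lVert B^{-1}\rVert^{-1}\leq\lVert B\rVert$, so this really is an enlargement of the right-hand side.

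To finish, I would apply the arithmetic--geometric mean inequality on the left, $2\lVert A\widehat{k}_{\lambda}\rVert\,\lVert B\rVert\leq\lVert A\widehat{k}_{\lambda}\rVert^{2}+\lVert B\rVert^{2}$, and then take the supremum over $\lambda\in\Omega$, using $\sup_{\lambda\in\Omega}\lVert A\widehat{k}_{\lambda}\rVert=\lVert A\rVert_{ber}$ and $\sup_{\lambda\in\Omega}\left|\widetilde{B^{*}A}(\lambda)\right|=ber(B^{*}A)$; dividing by $2$ yields (\ref{equ(2.13)}). There is really no serious obstacle here beyond hitting on the right way to split $\lVert B\widehat{k}_{\lambda}\rVert^{2}$; the computation itself is as routine as in Proposition~\ref{pro(2.1)}, and one only has to keep track of the direction of the inequalities when replacing $\lVert B\widehat{k}_{\lambda}\rVert^{2}$ by its lower bound.
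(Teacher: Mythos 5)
Your argument is correct and coincides with the paper's own proof: both start from the reformulation (\ref{equ(2.4)}) of the hypothesis, add $\lVert B\rVert^{2}-\lVert B\widehat{k}_{\lambda}\rVert^{2}$ to both sides (your "splitting" is exactly the paper's step to (\ref{equ(2.15)})), bound $\lVert B\widehat{k}_{\lambda}\rVert^{2}$ from below by $\lVert B^{-1}\rVert^{-2}$, apply the arithmetic--geometric mean inequality on the left, and take the supremum. No substantive difference from the paper's route.
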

\begin{proof}
As in the proof of Proposition \ref{pro(2.1)}, the condition  (\ref{equ(2.1)}) is equivalent to (\ref{equ(2.4)}), which is in turn
equivalent to
\begin{equation}
\label{equ(2.15)}
\lVert A\widehat{k}_{\lambda}\rVert^{2}+ \lVert B\rVert^{2}
\leq 2 Re\langle B^{*}A\widehat{k}_{\lambda},\widehat{k}_{\lambda}\rangle +r^{2}+
\lVert B\rVert^{2}-\lVert B\widehat{k}_{\lambda}\rVert^{2}.
\end{equation}
Since $ Re\langle B^{*}A\widehat{k}_{\lambda},\widehat{k}_{\lambda}\rangle
\leq |\langle B^{*}A\widehat{k}_{\lambda},\widehat{k}_{\lambda}\rangle|$, 
$\lVert B\widehat{k}_{\lambda}\lVert^{2}\geq\frac{1}{\lVert B^{-1}\lVert^{2}}$ and 
$\lVert A\widehat{k}_{\lambda}\lVert^{2}+\lVert B\lVert^{2}\geq
2\lVert B\lVert \ \lVert A\widehat{k}_{\lambda}\lVert$, 
for all $\lambda\in\Omega$, using (\ref{equ(2.15)}) we get that
\begin{equation}
\label{equ(2.16)}
2\lVert B\lVert \ \lVert A\widehat{k}_{\lambda}\lVert\leq 
2|\langle B^{*}A\widehat{k}_{\lambda},\widehat{k}_{\lambda}\rangle|+r^{2}+
\lVert B\rVert^{2}-\lVert B^{-1}\rVert^{-2}
\end{equation}
for all $\lambda\in\Omega$. Taking the supremum over $\lambda\in\Omega$, 
we deduce the required result (\ref{equ(2.13)}).
\end{proof}
\noindent Note that if, in Proposition \ref{pro(2.3)}, we choose $B=\mu A^{*}$, $\mu\neq 0$,  and
$A$ is invertible, then we get 
\begin{equation}
\label{equ(2.17)}
\lVert A\lVert_{ber}^{2}-ber(A^{2})\leq\frac{1}{2}\left[ \frac{r^{2}}{|\mu|}+ 
|\mu|\left( \lVert A\rVert^{2}-\lVert A^{-1}\rVert^{-2}\right) \right],
\end{equation}
provided that $\lVert A-\mu A^{*}\rVert\leq r$.\\
\noindent The following result can be  proved using the same argument as in the proof of Proposition \ref{pro(2.3)}.
\begin{proposition}
\label{pro(2.4)}
Let $A,B\in{\cal{B}}(\HHH({\Omega}))$. If $B$ 
is invertible and, for  $r>0$, we have
\begin{equation}
\label{equ(2.18)}
\lVert A-B\lVert_{ber}\leq r<\lVert B\rVert,
\end{equation}
then 
\begin{equation}
\label{equ(2.19)}
\lVert A\lVert_{ber}\leq\frac{1}{\sqrt{\lVert B\lVert^{2}-r^{2}}}\left[ ber(B^{*}A)+\frac{1}{2} \left(  
\lVert B\rVert^{2}-\lVert B^{-1}\rVert^{-2}\right) \right] .
\end{equation}
\end{proposition}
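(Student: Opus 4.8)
The plan is to follow the proof of Proposition \ref{pro(2.3)} almost verbatim, the only new ingredient being to absorb the term $r^{2}$ into the constant $\lVert B\rVert^{2}$ by an add-and-subtract manoeuvre before invoking the arithmetic--geometric mean inequality. First, exactly as in the proofs of Propositions \ref{pro(2.1)} and \ref{pro(2.3)}, I would rewrite the hypothesis $\lVert A-B\rVert_{ber}\le r$ in its equivalent pointwise form
\begin{equation*}
\lVert A\widehat{k}_{\lambda}\rVert^{2}+\lVert B\widehat{k}_{\lambda}\rVert^{2}\le 2\,Re\langle B^{*}A\widehat{k}_{\lambda},\widehat{k}_{\lambda}\rangle+r^{2},\qquad\lambda\in\Omega .
\end{equation*}

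Next, I would add $\lVert B\rVert^{2}-r^{2}$ to both sides and regroup, which gives
\begin{equation*}
\lVert A\widehat{k}_{\lambda}\rVert^{2}+\bigl(\lVert B\rVert^{2}-r^{2}\bigr)\le 2\,Re\langle B^{*}A\widehat{k}_{\lambda},\widehat{k}_{\lambda}\rangle+\lVert B\rVert^{2}-\lVert B\widehat{k}_{\lambda}\rVert^{2}.
\end{equation*}
Then I would bound the right-hand side using the two estimates already used in the earlier proofs: $Re\langle B^{*}A\widehat{k}_{\lambda},\widehat{k}_{\lambda}\rangle\le|\widetilde{B^{*}A(\lambda)}|$, and, since $B$ is invertible, $\lVert B\widehat{k}_{\lambda}\rVert^{2}\ge\lVert B^{-1}\rVert^{-2}$ by (\ref{equ(2.5)}) applied to $x=\widehat{k}_{\lambda}$. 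This yields
\begin{equation*}
\lVert A\widehat{k}_{\lambda}\rVert^{2}+\bigl(\lVert B\rVert^{2}-r^{2}\bigr)\le 2\,|\widetilde{B^{*}A(\lambda)}|+\lVert B\rVert^{2}-\lVert B^{-1}\rVert^{-2},\qquad\lambda\in\Omega .
\end{equation*}

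At this stage the strict inequality $r<\lVert B\rVert$ in (\ref{equ(2.18)}) becomes essential: it guarantees $\lVert B\rVert^{2}-r^{2}>0$, so the geometric--arithmetic mean inequality gives $\lVert A\widehat{k}_{\lambda}\rVert^{2}+(\lVert B\rVert^{2}-r^{2})\ge 2\sqrt{\lVert B\rVert^{2}-r^{2}}\,\lVert A\widehat{k}_{\lambda}\rVert$ for every $\lambda\in\Omega$. Combining this with the previous display, dividing by $2\sqrt{\lVert B\rVert^{2}-r^{2}}$, and then taking the supremum over $\lambda\in\Omega$ (so that $\sup_{\lambda}\lVert A\widehat{k}_{\lambda}\rVert=\lVert A\rVert_{ber}$ and $\sup_{\lambda}|\widetilde{B^{*}A(\lambda)}|=ber(B^{*}A)$) produces exactly (\ref{equ(2.19)}).

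I do not expect a genuine obstacle: the argument is structurally identical to that of Proposition \ref{pro(2.3)}, with $\lVert B\rVert$ replaced throughout by $\sqrt{\lVert B\rVert^{2}-r^{2}}$. The only points needing a little attention are the bookkeeping in the add-and-subtract step and the explicit use of the hypothesis $r<\lVert B\rVert$, which is precisely what makes the final division legitimate and the claimed bound finite.
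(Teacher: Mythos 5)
Your proof is correct and is precisely the argument the paper intends: the paper gives no separate proof of Proposition \ref{pro(2.4)}, stating only that it follows by the same argument as Proposition \ref{pro(2.3)}, and your add-and-subtract step followed by the arithmetic--geometric mean inequality applied to $\lVert A\widehat{k}_{\lambda}\rVert^{2}+(\lVert B\rVert^{2}-r^{2})$ is exactly the required modification. The use of $r<\lVert B\rVert$ to guarantee positivity before invoking AM--GM and dividing is correctly identified as the one point where the new hypothesis enters.
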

\begin{remark}
\label{rem(2.1)}
\begin{itemize}
\item[(a)] The result of  Proposition \ref{pro(2.4)} is of particular interest. Indeed, if we choose $B=\mu I$
 with $|\mu|>r$, then (\ref{equ(2.18)}) is obviously fulfilled and by  (\ref{equ(2.19)}) we get
 \begin{equation}
 \label{equ(2.20)}
 \lVert A\lVert_{ber}\leq\frac{ber(A)}{\sqrt{1-\left( \frac{r}{|\mu|}\right)^{2}}},
 \end{equation}
 provided that $\lVert A-\mu I\rVert\leq r$.
\item[(b)] On the other hand, if  we choose $B=\mu A^{*}$ with
$\lVert A\lVert\geq\frac{r}{|\mu|}$ ($\mu\neq 0$), then by  (\ref{equ(2.19)}) we get
\begin{equation}
\label{equ(2.21)}
\lVert A\lVert_{ber}\leq\frac{1}{\sqrt{\lVert A\lVert^{2}-\left( \frac{r}{|\mu|}\right)^{2}}}
\left[ ber(A^{2})+\frac{|\mu|}{2} \left(  
\lVert A\rVert^{2}-\lVert A^{-1}\rVert^{-2}\right) \right],
\end{equation}
 provided that $\lVert A-\mu A^{*}\rVert\leq r$.
\end{itemize}
\end{remark}
\begin{theorem}
\label{thm(2.1)}
Let $A,B\in{\cal{B}}(\HHH)$. If  $B$ 
is invertible such that $\lVert A-B\lVert\leq r$, for $r>0$, and
\begin{equation}
\label{equ(2.22)}
\frac{1}{\sqrt{r^{2}+1}}\leq\lVert B^{-1}\lVert<\frac{1}{r}.
\end{equation}
Then 
\begin{equation}
\label{equ(2.23)}
\lVert A\lVert_{ber}^{2}\leq ber^{2}(B^{*}A)+2ber(B^{*}A)
\frac{\lVert B^{-1}\lVert-\sqrt{1-r^{2}\lVert B^{-1}\lVert^{2}}}{\lVert B^{-1}\lVert}.
\end{equation}
\end{theorem}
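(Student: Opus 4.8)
The plan is to reduce (\ref{equ(2.23)}) to an estimate already established inside the proof of Proposition \ref{pro(2.1)}, and then to finish by recognizing a perfect-square identity. First I would observe that since $\lVert\cdot\rVert_{ber}\leq\lVert\cdot\rVert$ on $\mathcal{B}(\HHH)$, the hypothesis $\lVert A-B\rVert\leq r$ forces $\lVert A-B\rVert_{ber}\leq r$, i.e.\ inequality (\ref{equ(2.1)}) holds; hence the chain (\ref{equ(2.3)})--(\ref{equ(2.7)}) goes through verbatim, and in particular one has
\[
\lVert A\rVert_{ber}^{2}+\frac{1}{\lVert B^{-1}\rVert^{2}}\leq 2\,ber(B^{*}A)+r^{2}.
\]
Writing for brevity $b:=ber(B^{*}A)$ and $\beta:=\lVert B^{-1}\rVert$, this says $\lVert A\rVert_{ber}^{2}\leq 2b+r^{2}-\beta^{-2}$.

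The remaining step is elementary algebra. By (\ref{equ(2.22)}) we have $r\beta<1$, so $1-r^{2}\beta^{2}>0$ and $\sqrt{1-r^{2}\beta^{2}}$ is real; the lower bound $\beta\geq(r^{2}+1)^{-1/2}$ further gives $\beta^{2}\geq 1-r^{2}\beta^{2}$, so the correction coefficient $(\beta-\sqrt{1-r^{2}\beta^{2}})/\beta$ appearing in (\ref{equ(2.23)}) is nonnegative. Now I would expand
\[
\left(b-\frac{\sqrt{1-r^{2}\beta^{2}}}{\beta}\right)^{2}=b^{2}-\frac{2b\sqrt{1-r^{2}\beta^{2}}}{\beta}+\frac{1}{\beta^{2}}-r^{2}\ \geq\ 0 ,
\]
which rearranges precisely to $2b+r^{2}-\beta^{-2}\leq b^{2}+2b\,(\beta-\sqrt{1-r^{2}\beta^{2}})/\beta$. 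Chaining this with $\lVert A\rVert_{ber}^{2}\leq 2b+r^{2}-\beta^{-2}$ from the first paragraph yields exactly (\ref{equ(2.23)}).

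The only thing worth flagging is the identity that makes the argument work: the quadratic $q(b)=b^{2}-2\beta^{-1}\sqrt{1-r^{2}\beta^{2}}\,b+\beta^{-2}-r^{2}$ has discriminant $4\beta^{-2}(1-r^{2}\beta^{2})-4(\beta^{-2}-r^{2})=0$, so it is a perfect square; spotting this is the crux, and it also explains why the hypotheses on $\lVert B^{-1}\rVert$ in (\ref{equ(2.22)}) are exactly the right ones (the upper bound keeps the square root real, the lower bound keeps the correction term $\geq 0$). I would finally double-check that there is no circularity in reusing the Proposition \ref{pro(2.1)} computation, and one could append a remark recording the specializations $B=\mu I$ and $B=\mu A^{*}$ in the spirit of Remark \ref{rem(2.1)}.
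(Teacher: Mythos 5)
Your proof is correct and rests on exactly the same completing-the-square identity (the zero discriminant of $b^{2}-2\beta^{-1}\sqrt{1-r^{2}\beta^{2}}\,b+\beta^{-2}-r^{2}$) that drives the paper's argument; the only difference is that you pass to suprema first, via (\ref{equ(2.7)}), and then apply the scalar perfect-square inequality, whereas the paper divides the pointwise estimate (\ref{equ(2.24)}) by $|\widetilde{B^{*}A}(\lambda)|$, completes the square for each $\lambda$, and takes the supremum only at the end. Your ordering is a mild streamlining---it avoids justifying the division by $|\widetilde{B^{*}A}(\lambda)|$ and the implicit step of commuting the supremum with the increasing map $t\mapsto t^{2}+2Ct$---but it is not a genuinely different method.
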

\begin{proof}
Let $\lambda\in\Omega$. Then by (\ref{equ(2.6)}) we have
\begin{equation}
\label{equ(2.24)}
\lVert A\widehat{k}_{\lambda}\rVert^{2}+\frac{1}{\lVert B^{-1}\rVert^{2}}\leq
2|\langle B^{*}A\widehat{k}_{\lambda},\widehat{k}_{\lambda}\rangle|+r^{2},
\end{equation}
and since $\frac{1}{\lVert B^{-1}\lVert^{2}}-r^{2}>0$, we can conclude that
$|\langle B^{*}A\widehat{k}_{\lambda},\widehat{k}_{\lambda}\rangle|>0$, and thus  obtain
\begin{equation}
\label{equ(2.25)}
\frac{\lVert A\widehat{k}_{\lambda}\rVert^{2}}{|\langle B^{*}A\widehat{k}_{\lambda},\widehat{k}_{\lambda}\rangle|}
\leq 2+\frac{r^{2}}{|\langle B^{*}A\widehat{k}_{\lambda},\widehat{k}_{\lambda}\rangle|}
-\frac{1}{\lVert B^{-1}\rVert^{2}|\langle B^{*}A\widehat{k}_{\lambda},\widehat{k}_{\lambda}\rangle|}.
\end{equation}
Subtracting $|\langle B^{*}A\widehat{k}_{\lambda},\widehat{k}_{\lambda}\rangle|$ from both sides of 
(\ref{equ(2.25)}), we obtain
\begin{eqnarray*}
\label{equ(2.26)}
\frac{\lVert A\widehat{k}_{\lambda}\rVert^{2}}{|\langle B^{*}A\widehat{k}_{\lambda},\widehat{k}_{\lambda}\rangle|}
&-&|\langle B^{*}A\widehat{k}_{\lambda},\widehat{k}_{\lambda}\rangle|
\leq 2-|\langle B^{*}A\widehat{k}_{\lambda},\widehat{k}_{\lambda}\rangle|
-\frac{1-r^{2}\lVert B^{-1}\rVert^{2}}{\lVert B^{-1}\rVert^{2}|\langle B^{*}A\widehat{k}_{\lambda},\widehat{k}_{\lambda}\rangle|}\\
&=& 2-\frac{\sqrt{1-r^{2}\lVert B^{-1}\rVert^{2}}}{\lVert B^{-1}\rVert}-
\left(\sqrt{|\langle B^{*}A\widehat{k}_{\lambda},\widehat{k}_{\lambda}\rangle|}-
\frac{\sqrt{1-r^{2}\lVert B^{-1}\rVert^{2}}}
{\lVert B^{-1}\rVert\sqrt{|\langle B^{*}A\widehat{k}_{\lambda},\widehat{k}_{\lambda}\rangle|}}
\right)^{2}\\
&\leq & 2\left( \frac{\lVert B^{-1}\rVert-\sqrt{1-r^{2}\lVert B^{-1}\rVert^{2}}}
{\lVert B^{-1}\rVert}
\right),
\end{eqnarray*}
which gives 
\begin{equation}
\label{equ(2.27)}
\lVert A\widehat{k}_{\lambda}\rVert^{2}\leq|\langle B^{*}A\widehat{k}_{\lambda},\widehat{k}_{\lambda}\rangle|^{2}
+2|\langle B^{*}A\widehat{k}_{\lambda},\widehat{k}_{\lambda}\rangle|
\frac{\lVert B^{-1}\rVert-\sqrt{1-r^{2}\lVert B^{-1}\rVert^{2}}}{\lVert B^{-1}\rVert}
\end{equation}
Notice that  (\ref{equ(2.22)}) guaranties the positivity of the nominator of the fraction in the right hand side of (\ref{equ(2.27)}), while taking the supremum in (\ref{equ(2.27)}) over $\lambda\in\Omega$, 
we deduce the desired inequality (\ref{equ(2.23)}).
\end{proof}
\noindent Note that for $\mu\in\C$ with $0<r\leq |\mu|\leq \sqrt{r^{2}+1}$ and $\lVert A-\mu I\lVert\leq r$,
we can state that
\begin{equation}
\label{equ(2.29)}
\lVert A\lVert_{ber}^{2}\leq |\mu|^{2}ber(A^{2})+2|\mu|
\left( 1-\sqrt{|\mu|^{2}-r^{2}}\right)ber(A).
\end{equation}
Also, if $\lVert A-A^{*}\lVert\leq r$, and $A$ is invertible with 
$\frac{1}{\sqrt{r^{2}+1}}\leq\lVert A^{-1}\rVert\leq\frac{1}{r}$, then by  (\ref{equ(2.23)}) 
we have
\begin{equation}
\label{equ(2.30)}
\lVert A\lVert_{ber}^{2}\leq ber^{2}(A^{2})+2ber(A^{2})
\frac{\lVert A^{-1}\lVert-\sqrt{1-r^{2}\lVert A^{-1}\lVert^{2}}}{\lVert A^{-1}\lVert}.
\end{equation}
\begin{theorem}
\label{thm(2.2)}
Let $A,B\in{\cal{B}}(\HHH)$. If $B$ 
is invertible such that $\lVert A-B\lVert\leq r$, for $r>0$, and 
$\lVert B^{-1}\lVert<\frac{1}{r}$, then 
\begin{equation}
\label{equ(2.31)}
0\leq\lVert A\lVert_{ber}^{2}\lVert B\lVert^{2}-ber^{2}(B^{*}A)
\leq 2ber(B^{*}A)
\frac{\lVert B\lVert}{\lVert B^{-1}\lVert}
\left(\lVert B\lVert \ \lVert B^{-1}\lVert-\sqrt{1-r^{2}\lVert B^{-1}\lVert^{2}} \right).
\end{equation}
\end{theorem}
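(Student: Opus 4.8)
The plan is to treat the two inequalities in (\ref{equ(2.31)}) separately. The left-hand one is immediate from Cauchy--Schwarz: for every $\lambda\in\Omega$,
$$|\widetilde{B^{*}A}(\lambda)|=|\langle B^{*}A\widehat{k}_{\lambda},\widehat{k}_{\lambda}\rangle|=|\langle A\widehat{k}_{\lambda},B\widehat{k}_{\lambda}\rangle|\leq\lVert A\widehat{k}_{\lambda}\rVert\,\lVert B\widehat{k}_{\lambda}\rVert\leq\lVert A\rVert_{ber}\,\lVert B\rVert,$$
since $\lVert\widehat{k}_{\lambda}\rVert=1$; taking the supremum over $\lambda$ gives $ber(B^{*}A)\leq\lVert A\rVert_{ber}\lVert B\rVert$, hence $\lVert A\rVert_{ber}^{2}\lVert B\rVert^{2}-ber^{2}(B^{*}A)\geq 0$. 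For the right-hand inequality I would follow the completion-of-squares device already used for Theorem \ref{thm(2.1)}, carrying along an extra factor $\lVert B\rVert^{2}$.

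First, (\ref{equ(2.6)}) is available in the present setting: $\lVert A-B\rVert\leq r$ implies $\lVert A-B\rVert_{ber}\leq r$, i.e. (\ref{equ(2.1)}), so for all $\lambda\in\Omega$,
$$\lVert A\widehat{k}_{\lambda}\rVert^{2}+\frac{1}{\lVert B^{-1}\rVert^{2}}\leq 2|\widetilde{B^{*}A}(\lambda)|+r^{2}.$$
The hypothesis $\lVert B^{-1}\rVert<1/r$ gives $1-r^{2}\lVert B^{-1}\rVert^{2}>0$, and together with the last display it forces $|\widetilde{B^{*}A}(\lambda)|>0$ whenever $A\widehat{k}_{\lambda}\neq 0$ (when $A\widehat{k}_{\lambda}=0$ the target pointwise inequality is trivial at that $\lambda$).

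For the remaining $\lambda$, I would multiply the last display by $\lVert B\rVert^{2}$, divide by $|\widetilde{B^{*}A}(\lambda)|$, and subtract $|\widetilde{B^{*}A}(\lambda)|$, exactly as in the proof of Theorem \ref{thm(2.1)}, obtaining
$$\frac{\lVert B\rVert^{2}\lVert A\widehat{k}_{\lambda}\rVert^{2}}{|\widetilde{B^{*}A}(\lambda)|}-|\widetilde{B^{*}A}(\lambda)|\leq 2\lVert B\rVert^{2}-\left(|\widetilde{B^{*}A}(\lambda)|+\frac{\lVert B\rVert^{2}(1-r^{2}\lVert B^{-1}\rVert^{2})}{\lVert B^{-1}\rVert^{2}|\widetilde{B^{*}A}(\lambda)|}\right).$$
Applying the arithmetic--geometric mean inequality to the parenthesized sum (with $c:=\lVert B\rVert\sqrt{1-r^{2}\lVert B^{-1}\rVert^{2}}\,/\,\lVert B^{-1}\rVert$, so that this sum is $\geq 2c$) and then multiplying through by $|\widetilde{B^{*}A}(\lambda)|$ yields the pointwise bound
$$\lVert B\rVert^{2}\lVert A\widehat{k}_{\lambda}\rVert^{2}\leq|\widetilde{B^{*}A}(\lambda)|^{2}+2|\widetilde{B^{*}A}(\lambda)|\,\frac{\lVert B\rVert}{\lVert B^{-1}\rVert}\left(\lVert B\rVert\,\lVert B^{-1}\rVert-\sqrt{1-r^{2}\lVert B^{-1}\rVert^{2}}\right),$$
valid for all $\lambda\in\Omega$.

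Finally I would take the supremum over $\lambda\in\Omega$. The left-hand side becomes $\lVert B\rVert^{2}\lVert A\rVert_{ber}^{2}$, while on the right-hand side the constant $\frac{\lVert B\rVert}{\lVert B^{-1}\rVert}(\lVert B\rVert\lVert B^{-1}\rVert-\sqrt{1-r^{2}\lVert B^{-1}\rVert^{2}})$ is non-negative, because $\lVert B\rVert\,\lVert B^{-1}\rVert\geq\lVert BB^{-1}\rVert=1\geq\sqrt{1-r^{2}\lVert B^{-1}\rVert^{2}}$; hence $s\mapsto s^{2}+2cs$ (with $c\geq0$ this constant) is non-decreasing on $[0,\infty)$, and replacing $|\widetilde{B^{*}A}(\lambda)|$ by $ber(B^{*}A)$ only enlarges the bound, giving precisely (\ref{equ(2.31)}). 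I expect the only genuinely delicate point to be the division step: one must dispose of the $\lambda$'s with $A\widehat{k}_{\lambda}=0$ first and then invoke (\ref{equ(2.6)}) together with $\lVert B^{-1}\rVert<1/r$ to guarantee the strict positivity of $|\widetilde{B^{*}A}(\lambda)|$ for the rest; the remaining algebra is routine and parallels Theorem \ref{thm(2.1)}.
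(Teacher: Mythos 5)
Your proof is correct and follows essentially the same route as the paper: both start from the pointwise inequality (\ref{equ(2.6)}), divide by $|\widetilde{B^{*}A}(\lambda)|$ (after securing its positivity from $\lVert B^{-1}\rVert<1/r$), apply a completion-of-squares/AM--GM step to reach the pointwise bound (\ref{equ(2.34)}), and then pass to the supremum. Your write-up is in fact a little more careful than the paper's at the minor pressure points --- justifying the left inequality directly by Cauchy--Schwarz, checking $\lVert B\rVert\,\lVert B^{-1}\rVert\geq 1\geq\sqrt{1-r^{2}\lVert B^{-1}\rVert^{2}}$ so that the right-hand side is monotone in $|\widetilde{B^{*}A}(\lambda)|$ before replacing it by $ber(B^{*}A)$ --- but these are refinements of the same argument, not a different one.
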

\begin{proof}
Subtracting the quantity 
$\frac{|\langle B^{*}A\widehat{k}_{\lambda},\widehat{k}_{\lambda}\rangle|}{\lVert B\lVert^{2}}$ 
from both sides of (\ref{equ(2.25)}), we obtain
\begin{eqnarray*}
\label{equ(2.32)}
0&\leq&\frac{\lVert A\widehat{k}_{\lambda}\rVert^{2}}{|\langle B^{*}A\widehat{k}_{\lambda},\widehat{k}_{\lambda}\rangle|}
-\frac{|\langle B^{*}A\widehat{k}_{\lambda},\widehat{k}_{\lambda}\rangle|}{\lVert B\lVert^{2}}
\leq 2-\frac{|\langle B^{*}A\widehat{k}_{\lambda},\widehat{k}_{\lambda}\rangle|}{\lVert B\lVert^{2}}
-\frac{1-r^{2}\lVert B^{-1}\rVert^{2}}{\lVert B^{-1}\rVert^{2}|\langle B^{*}A\widehat{k}_{\lambda},\widehat{k}_{\lambda}\rangle|}\\
&=& 2-2\frac{\sqrt{1-r^{2}\lVert B^{-1}\rVert^{2}}}{\lVert B\lVert \ \lVert B^{-1}\rVert}-
\left(\frac{\sqrt{|\langle B^{*}A\widehat{k}_{\lambda},\widehat{k}_{\lambda}\rangle|}}{\lVert B\lVert}-
\frac{\sqrt{1-r^{2}\lVert B^{-1}\rVert^{2}}}
{\lVert B^{-1}\rVert\sqrt{|\langle B^{*}A\widehat{k}_{\lambda},\widehat{k}_{\lambda}\rangle|}}
\right)^{2}\\
&\leq & 2\left( \frac{\lVert B\lVert \ \lVert B^{-1}\rVert-\sqrt{1-r^{2}\lVert B^{-1}\rVert^{2}}}
{\lVert B\lVert \ \lVert B^{-1}\rVert}
\right),
\end{eqnarray*}
which is equivalent to
\begin{equation}
\label{equ(2.33)}
0\leq\lVert A\lVert_{ber}^{2}\lVert B\lVert^{2}-|\widetilde{B^{*}A}(\lambda)|^{2}
\leq 
2\frac{\lVert B\lVert}{\lVert B^{-1}\lVert}|\widetilde{B^{*}A}(\lambda)|
\left(\lVert B\lVert \ \lVert B^{-1}\lVert-\sqrt{1-r^{2}\lVert B^{-1}\lVert^{2}} \right),
\end{equation}
for all $\lambda\in\Omega$. The inequality (\ref{equ(2.33)}) also shows that
$\lVert B\lVert \ \lVert B^{-1}\lVert\geq\sqrt{1-r^{2}\lVert B^{-1}\lVert^{2}}$, and then, by
(\ref{equ(2.33)}), we get 
\begin{equation}
\label{equ(2.34)}
\lVert A\lVert_{ber}^{2}\lVert B\lVert^{2}
\leq |\widetilde{B^{*}A}(\lambda)|^{2}+
2\frac{\lVert B\lVert}{\lVert B^{-1}\lVert}|\widetilde{B^{*}A}(\lambda)|
\left(\lVert B\lVert \ \lVert B^{-1}\lVert-\sqrt{1-r^{2}\lVert B^{-1}\lVert^{2}} \right),
\end{equation}
for all $\lambda\in\Omega$. Thus, taking the supremum in (\ref{equ(2.34)}) we 
deduce the desired inequality (\ref{equ(2.31)}).
\end{proof}
\noindent As mentioned above, Theorem \ref{thm(2.2)} is of particular interest since
putting $B=\mu I$ with $|\mu|\geq r>0$ and assuming that  $\lVert A-\mu I\lVert\leq r$, then by 
inequality (\ref{equ(2.34)}) we get
\begin{equation}
\label{equ(2.35)}
0\leq\lVert A\lVert_{ber}^{2}-ber(A^{2})\leq 2|\mu|ber(A)
\left( 1-\sqrt{1-\left( \frac{r}{|\mu|}\right)^{2}}\right).
\end{equation}
Also, if $A$ is invertible and $\lVert A-\mu A^{*}\lVert\leq r$ and
$\lVert A^{-1}\lVert\leq\frac{|\mu|}{r}$, then by (\ref{equ(2.31)}) we obtain
\begin{equation}
\label{equ(2.36)}
0\leq\lVert A\lVert_{ber}^{4}-ber^{2}(A^{2})
\leq 2|\mu|ber(A^{2})
\frac{\lVert A\lVert}{\lVert A^{-1}\lVert}
\left(\lVert A\lVert \ \lVert A^{-1}\lVert-\sqrt{1-\frac{r^{2}}{|\mu|^{2}}\lVert A^{-1}\lVert^{2}} \right).
\end{equation}
\section{Berezin number and Berezin norm inequalities for some operators}
\noindent A bounded operator $T$ acting on a complex infinite dimensional Hilbert 
space $H$ is said to be normal if $T^{*}T=TT^{*}$; it is said to be positive if $\langle Tx,x\rangle\geq 0$ 
for all $x\in H$ (in this case we will write $T\geq 0$); and it is said to be hyponormal if its 
self-commutator $[T^{*},T]$ is positive, that is $T^{*}T-TT^{*}\geq 0$. It is immediate from these 
definitions that every normal operator is hyponormal and that an operator $T$ is hyponormal if 
and only if $\lVert T^{*}x\rVert\leq \lVert Tx\lVert$ for all $x\in H$. It is 
also obvious that every nonunitary isometry $V:H\rightarrow H$\ (i.e. $V^{*}V=I$ and $VV^{*}\not=I$) is hyponormal, 
but not normal.\\
In this section, we will consider some operators, including hyponormal operators, and prove some inequalities 
for their Berezin numbers and Berezin norms.
\begin{proposition}
\label{pro(3.1)}
Let $N\in{\cal{B}}(\HHH({\Omega}))$ be a normal operator on the RKHS $\HHH(\Omega)$, 
and let $n\geq 1$ be any integer. Then 
\begin{equation}
\label{equ(3.18)}
ber\left( N^{n}\right) \leq
\begin{cases}
\lVert N^{\frac{n}{2}}\rVert_{ber}\quad\text \ \ \ \ \ \ \ \ \ \ \   {if}\quad n\quad\text{is even}, \\
\lVert N\rVert \ \lVert N^{\frac{n-1}{2}}\rVert_{ber}\quad\text \ {if}\quad n\quad\text{is odd}.
\end{cases}
 \end{equation}
\end{proposition}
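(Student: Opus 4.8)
The plan is to write the Berezin symbol of a power of the normal operator $N$ as an inner product in which the exponent is distributed between the two slots, namely $\widetilde{N^{n}}(\lambda)=\langle N^{k}\widehat{k}_{\lambda},(N^{j})^{*}\widehat{k}_{\lambda}\rangle$ with $k+j=n$, and then to estimate it by the Cauchy--Schwarz inequality, using two consequences of normality: (i) every power $N^{j}$ of a normal operator is again normal, hence $\lVert (N^{j})^{*}x\rVert=\lVert N^{j}x\rVert$ for all $x\in\HHH(\Omega)$; and (ii) $(N^{j})^{*}=(N^{*})^{j}$, which holds for any bounded operator. The even and odd cases will differ only in how symmetrically $n$ can be split.

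First, for $n=2m$ even, I would write, for each $\lambda\in\Omega$,
\[
\widetilde{N^{n}}(\lambda)=\langle N^{2m}\widehat{k}_{\lambda},\widehat{k}_{\lambda}\rangle=\langle N^{m}\widehat{k}_{\lambda},(N^{m})^{*}\widehat{k}_{\lambda}\rangle ,
\]
and then, by Cauchy--Schwarz together with the normality of $N^{m}$,
\[
\bigl|\widetilde{N^{n}}(\lambda)\bigr|\le\lVert N^{m}\widehat{k}_{\lambda}\rVert\,\lVert (N^{m})^{*}\widehat{k}_{\lambda}\rVert=\lVert N^{m}\widehat{k}_{\lambda}\rVert^{2}.
\]
Taking the supremum over $\lambda\in\Omega$ gives $ber(N^{n})\le\lVert N^{m}\rVert_{ber}^{2}=\lVert N^{n/2}\rVert_{ber}^{2}$, settling the even case.

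For $n=2m+1$ odd I would split the exponent asymmetrically as $n=(m+1)+m$, obtaining for each $\lambda\in\Omega$
\[
\widetilde{N^{n}}(\lambda)=\langle N^{m+1}\widehat{k}_{\lambda},(N^{m})^{*}\widehat{k}_{\lambda}\rangle ,
\]
so that, by Cauchy--Schwarz and the normality of $N^{m}$,
\[
\bigl|\widetilde{N^{n}}(\lambda)\bigr|\le\lVert N^{m+1}\widehat{k}_{\lambda}\rVert\,\lVert N^{m}\widehat{k}_{\lambda}\rVert\le\lVert N\rVert\,\lVert N^{m}\widehat{k}_{\lambda}\rVert^{2},
\]
where the last step uses submultiplicativity, $\lVert N^{m+1}\widehat{k}_{\lambda}\rVert=\lVert N(N^{m}\widehat{k}_{\lambda})\rVert\le\lVert N\rVert\,\lVert N^{m}\widehat{k}_{\lambda}\rVert$. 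Passing to the supremum over $\lambda\in\Omega$ yields $ber(N^{n})\le\lVert N\rVert\,\lVert N^{m}\rVert_{ber}^{2}=\lVert N\rVert\,\lVert N^{(n-1)/2}\rVert_{ber}^{2}$, which settles the odd case.

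I do not expect a serious obstacle. The only points needing a little care are the legitimacy of the asymmetric splitting of the exponent inside $\widetilde{N^{n}}$, which is immediate since $N$ commutes with all its powers, and the identity $\lVert (N^{m})^{*}x\rVert=\lVert N^{m}x\rVert$, which rests on the fact that $N^{m}$ is normal whenever $N$ is. If one prefers to avoid Cauchy--Schwarz, the same estimates follow from the mixed Schwarz inequality $|\langle Tx,x\rangle|\le\langle|T|x,x\rangle$ for normal $T$, applied to $T=N^{n}$: using $N^{*}N=NN^{*}$ one rewrites $|N^{2m}|=(N^{*})^{m}N^{m}$ and $|N^{2m+1}|\le\lVert N\rVert\,(N^{*})^{m}N^{m}$, so that $\langle|N^{2m}|\widehat{k}_{\lambda},\widehat{k}_{\lambda}\rangle=\lVert N^{m}\widehat{k}_{\lambda}\rVert^{2}$ and similarly in the odd case, reproducing the two bounds above.
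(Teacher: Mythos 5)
Your proof is correct and takes essentially the same route as the paper's: split $N^{n}$ across the two slots of the inner product, use normality of $N^{m}$ to replace $\lVert (N^{m})^{*}\widehat{k}_{\lambda}\rVert$ by $\lVert N^{m}\widehat{k}_{\lambda}\rVert$, apply Cauchy--Schwarz, and take suprema (the paper merely keeps two independent points $\lambda,\mu$ before setting $\mu=\lambda$). Note that, exactly like the paper's own proof, what you actually establish are the bounds $ber(N^{n})\le\lVert N^{n/2}\rVert_{ber}^{2}$ and $ber(N^{n})\le\lVert N\rVert\,\lVert N^{(n-1)/2}\rVert_{ber}^{2}$, i.e.\ with the Berezin norms squared, so the unsquared right-hand sides displayed in the statement of the proposition appear to be a typographical slip rather than a defect of your argument.
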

\begin{proof}
For any fixed integer $k\geq 0$, we have 
\begin{eqnarray*}
\label{equ(3.1)}
|\langle N^{2k}\widehat{k}_{\lambda},\widehat{k}_{\mu}\rangle|&=&
|\langle N^{k}\widehat{k}_{\lambda},N^{k*}\widehat{k}_{\mu}\rangle|\leq
\lVert N^{k}\widehat{k}_{\lambda}\lVert \ \lVert N^{k*}\widehat{k}_{\mu}\lVert \\
&\leq&\sup_{\lambda\in\Omega}\lVert N^{k}\widehat{k}_{\lambda}\lVert \ 
\sup_{\mu\in\Omega}\lVert N^{k}\widehat{k}_{\mu}\lVert=\lVert N^{k}\lVert^{2}_{ber}
\end{eqnarray*}
for all $\lambda,\mu\in\Omega$. So, we get
\begin{equation}
\label{equ(3.2)}
|\langle N^{2k}\widehat{k}_{\lambda},\widehat{k}_{\mu}\rangle|\leq \lVert N^{k}\lVert^{2}_{ber}
\quad\forall k\geq 0.
\end{equation}
 In particular, for $\mu =\lambda$, we obtain
\begin{equation}
\label{equ(3.3)}
ber(N^{2k})\leq \lVert N^{k}\lVert^{2}_{ber}
\quad\forall k\geq 0.
\end{equation}
For $k=1$, we have from (\ref{equ(3.3)}) that 
\begin{equation}
\label{equ(3.4)}
ber(N^{2})\leq \lVert N^{}\lVert^{2}_{ber}
\end{equation}
for any normal operator $N$ in ${\cal{B}}(\HHH({\Omega})$.\\
\noindent On the other hand, we have
\begin{eqnarray*}
\label{equ(3.5)}
|\langle N^{2k+1}\widehat{k}_{\lambda},\widehat{k}_{\mu}\rangle|&=&
|\langle N^{2k}N\widehat{k}_{\lambda},\widehat{k}_{\mu}\rangle|=
|\langle N^{k}N\widehat{k}_{\lambda},N^{k*}\widehat{k}_{\mu}\rangle|\leq
\lVert N^{k}N\widehat{k}_{\lambda}\lVert \ \lVert N^{k*}\widehat{k}_{\mu}\lVert \\
&\leq&\sup_{\lambda\in\Omega}\lVert N^{k}N\widehat{k}_{\lambda}\lVert \ 
\sup_{\mu\in\Omega}\lVert N^{k}\widehat{k}_{\mu}\lVert
\leq\lVert N\lVert \ \lVert N^{k}\lVert^{2}_{ber}
\end{eqnarray*}
for all $\lambda,\mu\in\Omega$.  In particular, by putting $\mu =\lambda$ 
we have from the last inequality that 
\begin{equation}
\label{equ(3.6)}
ber(N^{2k+1})\leq\lVert N\lVert \  \lVert N^{k}\lVert^{2}_{ber}
\quad\forall k\geq 0.
\end{equation}
Now, the desired result follows from (\ref{equ(3.3)}) and (\ref{equ(3.5)}), which proves the proposition.
\end{proof}
\noindent The following result gives the exact relationship between the Berezin number and the Berezin norm 
of a concrete orthogonal projection (normal operator).
\begin{example}
\label{exp(3.6)}
Let $S$ be the shift operator, $Sf=zf$, on the Hardy space $H^{2}(\D)$ over 
the unit disc $\D=\{z\in\C: |z|<1\}$, which consists of analytic functions on
$\D$ having the sequence of Taylor coefficients belonging to the  space 
$l^{2}$. Consider the operator $A$ defined on $H^{2}(\D)$  by
\begin{equation}
\label{equ(3.7)}
A=S(I-SS^{*})S^{*}.
\end{equation}
Then, $ber(A)=\lVert A\lVert^{2}_{ber}=\frac{1}{4}$.
\end{example}
\begin{proof}
First, note that $\widehat{k}_{\lambda}(z)=\frac{(1-|\lambda|^{2})^{\frac{1}{2}}}{1-\overline{\lambda}z}$ 
$(\lambda\in\D)$ is the normalized reproducing kernel of $H^{2}(\D)$. Then, for all $\lambda\in\D$, we have
\begin{eqnarray}
\label{equ(3.8)}
\nonumber
\lVert A\widehat{k}_{\lambda}\lVert &=&\lVert S(I-SS^{*})S^{*}\widehat{k}_{\lambda}\lVert=
\lVert (I-SS^{*})\overline{\lambda}\widehat{k}_{\lambda}\lVert\\
&=& |\lambda|\left\lVert  \frac{(1-|\lambda|^{2})^{\frac{1}{2}}}{1-\overline{\lambda}z}-
\frac{\overline{\lambda}z(1-|\lambda|^{2})^{\frac{1}{2}}}{1-\overline{\lambda}z}\right\rVert
=|\lambda|(1-|\lambda|^{2})^{\frac{1}{2}}
\end{eqnarray}
On the other hand, for all $\lambda\in\D$, we have
\begin{eqnarray}
\label{equ(3.9)}
\nonumber
\widetilde{A}(\lambda) &=&\langle S(I-SS^{*})S^{*}\widehat{k}_{\lambda},\widehat{k}_{\lambda}\rangle
=\langle (I-SS^{*})S^{*}\widehat{k}_{\lambda},S^{*}\widehat{k}_{\lambda}\rangle\\
&=& |\lambda|^{2}\langle (I-SS^{*})\widehat{k}_{\lambda},\widehat{k}_{\lambda}\rangle=
|\lambda|^{2}(1-|\lambda|^{2})
\end{eqnarray}
Thus, we get that $\widetilde{A}(\lambda)=\lVert A\widehat{k}_{\lambda}\lVert^2$ 
for all $\lambda\in\D$. This implies that one has \ $ber(A)=\lVert A\lVert^{2}_{ber}$. Also, by using that for the function 
$f(x)=x(1-x)$, ($0\leq x<1$), $f_{\max}=f(\frac{1}{2})$, we deduce from the formula 
$\widetilde{A}(\lambda)=|\lambda|^{2}(1-|\lambda|^{2})$ that 
$\sup_{\lambda\in\D}(\widetilde{A}(\lambda))=\frac{1}{4}$, that is $ber(A)=\frac{1}{4}$. 
Hence $ber(A)=\lVert A\lVert^{2}_{ber}=\frac{1}{4}$, as desired.
\end{proof}
\noindent For any hyponormal operator $T\in{\cal{B}}(\HHH({\Omega}))$ it is easy to show that:
\begin{itemize}
\item[(i)] $\lVert T^{*}\lVert_{ber}\leq\lVert T\lVert_{ber}$;
\item[(ii)] 
\begin{equation}
\label{equ(3.10)}
ber\left([ T^{*},T]\right)\leq \lVert T\lVert_{ber}^{2} .
\end{equation}
\end{itemize}
In fact, for any $\lambda\in\Omega$, we have  that 
\begin{eqnarray}
\label{equ(3.11)}
0\leq \widetilde{[ T^{*},T]}(\lambda)=\langle (T^{*}T-TT^{*})\widehat{k}_{\lambda},\widehat{k}_{\lambda}\rangle
=\lVert T\widehat{k}_{\lambda}\lVert^{2}-\lVert T^{*}\widehat{k}_{\lambda}\lVert^{2}
\end{eqnarray}
From this, it is immediate that 
$\lVert T^{*}\lVert_{ber}\leq\lVert T\lVert_{ber}$. Since $T$ is hyponormal,  we have
$\lVert T\widehat{k}_{\lambda}\lVert\geq\lVert T^{*}\widehat{k}_{\lambda}\lVert$, 
and hence we deduce from (\ref{equ(3.11)}) $\widetilde{[ T^{*},T]}(\lambda)\leq\lVert T\widehat{k}_{\lambda}\lVert^{2}$ for all $\lambda\in\Omega$. Therefore, we get \
$ber\left([ T^{*},T]\right)\leq \lVert T\lVert_{ber}^{2}$, which proves inequality 
(\ref{equ(3.10)}).\\
\noindent Our next result is about more general operators.
\begin{theorem}
\label{thm(3.1)}
Let $A\in{\cal{B}}(\HHH({\Omega}))$. Then 
\begin{equation}
\label{equ(3.12)}
ber(A)\leq\left( \lVert A\rVert^{2}_{ber}-
\inf_{\mu\in\Omega}\left\lVert(A-\widetilde{A}(\mu))\widehat{k}_{\mu}\right\rVert^{2}\right)^{\frac{1}{2}} .
\end{equation}
\end{theorem}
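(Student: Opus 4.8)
The plan is to pass from the inequality at the level of individual unit vectors to the supremum. Fix $\mu \in \Omega$ and write $\widehat{k}_{\mu}$ for the normalized reproducing kernel at $\mu$, which is a unit vector. The starting point is the elementary identity for the vector $A\widehat{k}_{\mu}$ decomposed along $\widehat{k}_{\mu}$: namely
\[
\left\lVert (A-\widetilde{A}(\mu))\widehat{k}_{\mu}\right\rVert^{2}
= \lVert A\widehat{k}_{\mu}\rVert^{2} - \left|\langle A\widehat{k}_{\mu},\widehat{k}_{\mu}\rangle\right|^{2}
= \lVert A\widehat{k}_{\mu}\rVert^{2} - |\widetilde{A}(\mu)|^{2},
\]
which is just the Pythagorean theorem applied to the orthogonal decomposition $A\widehat{k}_{\mu} = \widetilde{A}(\mu)\widehat{k}_{\mu} + (A-\widetilde{A}(\mu))\widehat{k}_{\mu}$, using $\langle (A-\widetilde{A}(\mu))\widehat{k}_{\mu},\widehat{k}_{\mu}\rangle = \widetilde{A}(\mu) - \widetilde{A}(\mu) = 0$. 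Rearranged, this reads
\[
|\widetilde{A}(\mu)|^{2} = \lVert A\widehat{k}_{\mu}\rVert^{2} - \left\lVert (A-\widetilde{A}(\mu))\widehat{k}_{\mu}\right\rVert^{2}.
\]

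Next I would bound each term on the right by its extremal value over $\Omega$: we have $\lVert A\widehat{k}_{\mu}\rVert^{2}\le \sup_{\lambda\in\Omega}\lVert A\widehat{k}_{\lambda}\rVert^{2} = \lVert A\rVert_{ber}^{2}$, and $\left\lVert (A-\widetilde{A}(\mu))\widehat{k}_{\mu}\right\rVert^{2} \ge \inf_{\lambda\in\Omega}\left\lVert (A-\widetilde{A}(\lambda))\widehat{k}_{\lambda}\right\rVert^{2}$. Combining these with the displayed identity gives, for every $\mu\in\Omega$,
\[
|\widetilde{A}(\mu)|^{2} \le \lVert A\rVert_{ber}^{2} - \inf_{\lambda\in\Omega}\left\lVert (A-\widetilde{A}(\lambda))\widehat{k}_{\lambda}\right\rVert^{2}.
\]
Taking the supremum over $\mu\in\Omega$ on the left yields $ber(A)^{2} \le \lVert A\rVert_{ber}^{2} - \inf_{\lambda\in\Omega}\left\lVert (A-\widetilde{A}(\lambda))\widehat{k}_{\lambda}\right\rVert^{2}$, and taking square roots (the right-hand side is nonnegative, being a supremum of nonnegative quantities on the left) gives exactly inequality (\ref{equ(3.12)}).

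There is essentially no obstacle here: the whole argument is the Pythagorean identity plus monotonicity of $\sup$ and $\inf$. The only point that requires a line of care is the final square-root step — one must note that the quantity under the radical is nonnegative, which is automatic since it dominates $ber(A)^2 \ge 0$ by the inequality just derived. It is worth remarking that this sharpens $ber(A)\le \lVert A\rVert_{ber}$ precisely by the amount $\inf_{\lambda}\lVert(A-\widetilde{A}(\lambda))\widehat{k}_{\lambda}\rVert^{2}$, which vanishes iff every $\widehat{k}_{\lambda}$ is "nearly an eigenvector" in the appropriate sense, so the bound is an equality exactly in that degenerate regime.
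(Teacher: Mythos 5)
Your proof is correct and follows essentially the same route as the paper's: the Pythagorean identity $\lVert(A-\widetilde{A}(\mu))\widehat{k}_{\mu}\rVert^{2}=\lVert A\widehat{k}_{\mu}\rVert^{2}-|\widetilde{A}(\mu)|^{2}$, then bounding the two terms by $\sup$ and $\inf$ respectively and taking the supremum over $\mu$. (Only your closing aside overreaches: vanishing of the infimum merely collapses the bound back to $ber(A)\le\lVert A\rVert_{ber}$, which need not be an equality.)
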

\begin{proof}
It is easy to see that 
\begin{equation}
\label{equ(3.13)}
\left\lVert(A-\widetilde{A}(\lambda))\widehat{k}_{\lambda}\right\rVert^{2}=
\left\lVert A\widehat{k}_{\lambda}\right\rVert^{2}-\left|\widetilde{A}(\lambda)\right|^{2},
\quad\forall\lambda\in\Omega.
\end{equation}
Then
\begin{equation}
\label{equ(3.14)}
\left|\widetilde{A}(\lambda)\right|^{2}=\left\lVert A\widehat{k}_{\lambda}\right\rVert^{2}
-\left\lVert(A-\widetilde{A}(\lambda))\widehat{k}_{\lambda}\right\rVert^{2},
\quad\forall\lambda\in\Omega.
\end{equation}
Whence 
\begin{eqnarray}
\label{equ(3.15)}
\nonumber
\left|\widetilde{A}(\lambda)\right|^{2}&\leq&
\left(\left\lVert A\widehat{k}_{\lambda}\right\rVert^{2} -
\inf_{\mu\in\Omega}\left\lVert(A-\widetilde{A}(\mu))\widehat{k}_{\mu}\right\rVert^{2}\right)^{\frac{1}{2}}\\
&\leq&\left(\sup_{\lambda\in\Omega}\left\lVert A\widehat{k}_{\lambda}\right\rVert^{2} -
\inf_{\mu\in\Omega}\left\lVert(A-\widetilde{A}(\mu))\widehat{k}_{\mu}\right\rVert^{2}\right)^{\frac{1}{2}}
\end{eqnarray}
for all $\lambda\in\Omega$, which implies the desired inequality (\ref{equ(3.12)}).
\end{proof}
\noindent The next corollary  gives an example of operators for which $ber(A)<\lVert A\rVert_{ber}$.
\begin{corollary}
\label{cor(3.1)}
If \ $\inf_{\mu\in\Omega}\left\lVert(A-\widetilde{A}(\mu))\widehat{k}_{\mu}\right\rVert>0$, then  \
$ber(A)<\lVert A\rVert_{ber}$.
\end{corollary}
\smallskip

\noindent In conclusion, note that conditions of the type 
\begin{equation}
\label{equ(3.16)}
\left\lVert(A-\widetilde{A}(\lambda))\widehat{k}_{\lambda}\right\rVert\rightarrow 0
\quad\text{and}\quad
\left\lVert(A-\widetilde{A}(\mu))^{*}\widehat{k}_{\mu}\right\rVert\rightarrow 0
\end{equation}
as $\lambda\rightarrow\partial\Omega$ define the so-called Engli\v{s} C*-operator algebras 
which are studied, for instance, in \cite{Englis2} and \cite{Karaev1}, and are closely related 
to an  unsolved question of Engli\v{s} 
 [8, Question 1] for Bergman space Toeplitz operators, where, in particular, he asked the following question: \\ 
Is it true that 
\begin{equation}
\label{equ(3.17)}
T_{\varphi}\in\AAA_{B}:=\left\lbrace 
T\in{\cal{B}}(L^{2}_{a}(\D)):\left\lVert T\widehat{k}_{a,\lambda}\right\rVert^{2}-
\left|\widetilde{T}(\lambda)\right|^{2}\rightarrow 0\quad\text{radially, and similarly for }T^{*}
\right\rbrace, 
\end{equation}
for all $\varphi\in L^{\infty}(\D)$? \\ Here, $\widehat{k}_{a,\lambda}=\frac{(1-|\lambda|^2)^2}{(1-\overline{\lambda}z)^2}$ is the normalized reproducing kernel of $L^2_a(\D)$. In \cite{ Garayev4,Gurdal}, the authors discuss this question via the so-called maximal Berezin set and the 
Berezin number.

\begin{thebibliography}{99}
\bibitem{Aronzajn} {\sc Aronzajn, N.}: {\em Theory of reproducing kernels.} Trans. Amer. Math. Soc, 68 (1950), 337--404.
\bibitem{Berezin1} {\sc Berezin, F.A.}: {\em Covariant and contravariant symbols of operators.} Math. USSR-Izv. 9 (1972), 1117--1151.
\bibitem{Berezin2} {\sc Berezin, F.A.}: {\em Quantization.} Math. USSR-Izv.  8 (1974), 1109--1163.
\bibitem{Coburn} {\sc Coburn, L.A.}: {\em Berezin transform and Weyl-type unitary operators on the Bergman space.} Proc. Amer. Math. Soc.  140 (10), (2012), 3445--3451.
\bibitem{Dragomir1} {\sc Dragomir, S.S.}: {\em Inequalities for the numerical radius of linear operators in Hilbert spaces.} SpringerBriefs in Mathematics, Springer, Cham, 2013, x+120 pp. 
\bibitem{Dragomir2} {\sc Dragomir, S.S.}: {\em A survey of some recent inequalities for the norm 
and numerical radius of operators in Hilbert spaces.} Banach J. Math. Anal., 1 (2), (2007), 154--175.
\bibitem{Englis1}{\sc Engli\v{s}, M.}: {\em Compact Toeplitz operators via the Berezin transform on 
bounded symmetric domains.} Integr. Equat. Oper. Theory,  33 (1999), 426--455.
\bibitem{Englis2} {\sc Engli\v{s}, M.}: {\em Toeplitz operators and the Berezin transform on $H^{2}$.} Lin. Alg. Appl. 223--224 (1995), 171--204.
\bibitem{Garayev4} {\sc Garayev, M.T.}: {\em The Berezin number, norm of Hankel operator and related topics.}
Operator Theoty: Advances and Applications, vol.  247 (2015), 87--100.
\bibitem{Gurdal}{\sc G\"{u}rdal, M., Garayev, M. T., Saltan, S., and Yamanci, U.}:
 {\em On some numerical characteristic of operators.} Arab J.  Math. Sci.,  21 (2015), 118-126.
\bibitem{Karaev1} {\sc Karaev, M.T., G\"{u}rdal, M., and Huban, M.B.}: {\em Reproducing kernels, Engli\v{s} algebras and some applications.} Studia Math, 232 (2), (2016), 113--141.
\bibitem{Karaev2} {\sc Karaev, M.T.}: {\em Reproducing kernels and Berezin symbols techniques in various questions of operator theory.}
Complex Anal. Oper. Theory,  7 (2013), 983--1018.
\bibitem{Karaev4} {\sc Karaev, M.T}: {\em Berezin symbols and invertibility of operators on the 
functional Hilbert spaces.} J. Funct. Anal.  238 (2006). 181-192.
\bibitem{Miao} {\sc Miao, J. and Zheng, D.}: {\em Compact operators on Bergman spaces.} Integr. 
Equat. Oper. Theory  48 (2004), 426--455.
\bibitem{Saitoh} {\sc Saitoh, S. and Sowano, Y.}: {\em Theory of reproducing kernels and applications.} 
Springer, Singapore, (2016).
\bibitem{Zhu}  {\sc Zhu, K.}: {\em Operator theory in function spaces.} Second ed., Mathematical Surveys and Monographs, Vol. 138, American
Mathematical Society, Providence, R.I. 2007.
\end{thebibliography}
\end{document}